\definecolor{webgreen}{rgb}{0,.5,0}
\definecolor{webbrown}{rgb}{.6,0,0}
\begin{document}

\theoremstyle{plain}
\newtheorem{theorem}{Theorem}
\newtheorem{remark}{Remark}
\newtheorem{lemma}{Lemma}

\newtheorem*{proposition}{Proposition}

\begin{center}
\vskip 1cm{\Large\bf A determinantal formula for the hyper-sums \\
\vskip .07in of powers of integers}
\vskip .2in \large Jos\'{e} Luis Cereceda \\
{\normalsize Collado Villalba, 28400 (Madrid), Spain} \\
\href{mailto:jl.cereceda@movistar.es}{\normalsize{\tt jl.cereceda@movistar.es}}
\end{center}

\begin{abstract}
For non-negative integers $r$ and $m$, let $S_m^{(r)}(n)$ denote the $r$-fold summation (or hyper-sum) over the first $n$ positive integers to the $m$th powers, with the initial condition $S_m^{(0)}(n) =n^m$. In this paper, we derive a new determinantal formula for $S_m^{(r)}(n)$. Specifically, we show that, for all integers $r\geq 0$ and $m \geq 1$, $S_m^{(r)}(n)$ is proportional to $S_1^{(r)}(n)$ times the determinant of a lower Hessenberg matrix of order $m-1$ involving the Bernoulli numbers and the variable $N_r = n + \frac{r}{2}$. Furthermore, whenever $r\geq 1$, evaluating this determinant gives us $S_m^{(r)}(n)$ as $S_1^{(r)}(n)$ times an even or odd polynomial in $N_r$ of degree $m-1$, depending on whether $m$ is odd or even.
\end{abstract}

\section{Introduction}

For integers $m,r \geq 0$ and $n \geq 1$, the hyper-sums of powers of integers $S_m^{(r)}(n)$ are defined recursively as
\begin{equation*}
S_m^{(r)}(n) = \left\{
              \begin{array}{ll}
                n^m  & \text{if}\,\, r =0, \\
                \sum_{i=1}^n S_m^{(r-1)}(i) &  \text{if}\,\, r \geq 1.
              \end{array}
            \right.
\end{equation*}
In particular, $S_m^{(1)}(n) = \sum_{i=1}^{n} S_m^{(0)}(i)$ is the ordinary sum of powers of integers $S_m(n) = 1^m + 2^m + \cdots + n^m$. Furthermore, we have \cite[p. 281]{knuth}
\begin{equation}\label{knuth}
  S_1^{(r)}(n) = \binom{n+r}{r+1}, \quad\text{and}\quad S_2^{(r)}(n) = \frac{2n+r}{r+2} S_1^{(r)}(n).
\end{equation}

For its intrinsic interest, next we give an explicit representation of $S_m^{(r)}(n)$ as a polynomial in $n$ of degree $m+r$ without constant term. To this end, we first recall that the hyper-sums $S_m^{(r+1)}(n)$, $S_m^{(r)}(n)$, and $S_{m+1}^{(r)}(n)$ satisfy the recurrence relation \cite[Theorem 1]{cere}
\begin{equation}\label{rec}
  S_m^{(r+1)}(n) = \frac{n+r}{r} S_m^{(r)}(n) - \frac{1}{r} S_{m+1}^{(r)}(n),
\end{equation}
which applies to any integers $m \geq 0$ and $r \geq 1$. (For the sake of completeness, we provide a proof of the recurrence \eqref{rec} in the appendix.) Solving this recurrence gives us \cite[Theorem 3]{cere}
\begin{equation}\label{rec2}
   S_m^{(r+1)}(n) = \frac{1}{r!} \sum_{i=0}^r (-1)^i q_{r,i}(n) S_{m+i}(n),
\end{equation}
where $q_{r,i}(n)$ is the following polynomial in $n$ of degree $r-i$:
\begin{equation}\label{poly}
  q_{r,i}(n) = \sum_{j=0}^{r-i} \binom{i+j}{i} \genfrac{[}{]}{0pt}{}{r+1}{i+j+1} n^j,
\end{equation}
and the $\genfrac{[}{]}{0pt}{}{m}{n}$'s are the (unsigned) Stirling numbers of the first kind. (Note that \eqref{rec2} holds for $r=0$ if we set $q_{0,0}(n) =1$. Let us also observe that $q_{r,i}(n)$ is equal to $\genfrac{[}{]}{0pt}{}{r+n+1}{i+n+1}_{n+1}$, where $\genfrac{[}{]}{0pt}{}{m}{n}_{r}$ denotes the $r$-Stirling numbers of the first kind \cite{broder}. The formula $S_m^{(r+1)}(n) = \frac{1}{r!} \sum_{i=0}^r (-1)^i \genfrac{[}{]}{0pt}{}{r+n+1}{i+n+1}_{n+1} S_{m+i}(n)$ has been derived recently and independently by Karg{\i}n {\it et al.\/} \cite[Equation (4.7)]{kargin}.)

On the other hand, as is well-known, the power sum polynomial $S_{m+i}(n)$ can be expressed by means of the Bernoulli formula
\begin{equation}\label{ber}
  S_{m+i}(n) = \frac{1}{m+i+1} \sum_{t=1}^{m+i+1} (-1)^{m+i+1-t} \binom{m+i+1}{t} B_{m+i+1-t} \, n^t,
\end{equation}
where the $B_j$'s are the Bernoulli numbers $B_0 =1$, $B_1 = -\frac{1}{2}$, $B_2 = \frac{1}{6}$, $B_3 =0$, etc. Thus, by substituting \eqref{poly} and \eqref{ber} into \eqref{rec2}, it can be shown that
\begin{equation}\label{polyn}
  S_m^{(r)}(n) = \sum_{k=1}^{m+r} c_{m,r}^{k} n^k,
\end{equation}
where the coefficient $c_{m,r}^{k}$ is given by
\begin{equation*}
  c_{m,r}^{k} = \frac{(-1)^{m+1-k}}{(r-1)!} \sum_{i=0}^{r-1} \sum_{j=0}^{k-1} \frac{(-1)^j}{m+i+1} \binom{i+j}{i}
  \binom{m+i+1}{k-j} \genfrac{[}{]}{0pt}{}{r}{i+j+1} B_{m+i+j+1-k},
\end{equation*}
for integers $m \geq 0$, $r \geq 1$, and $k =1,2,\ldots, m+r$, and where $B_t =0$ whenever $t$ is a negative integer. In particular, we find that
\begin{equation*}
  c_{m,r}^{1} = \frac{(-1)^{m}}{(r-1)!} \sum_{i=0}^{r-1} \genfrac{[}{]}{0pt}{}{r}{i+1} B_{m+i}.
\end{equation*}

\begin{remark}
From \eqref{rec}, it is immediate to derive the following recurrence relation between the coefficients of the hyper-sums polynomials $S_m^{(r+1)}(n)$, $S_m^{(r)}(n)$, and $S_{m+1}^{(r)}(n)$, namely,
\begin{equation*}
  c_{m,r+1}^k = c_{m,r}^k + \frac{1}{r} \big( c_{m,r}^{k-1} - c_{m+1,r}^k \big),
\end{equation*}
with $m \geq 0$, $r \geq 1$, and $k =1,2,\ldots,m+r+1$, and where it is understood that $c_{m,r}^0 = c_{m,r}^{m+r+1} =0$.
\end{remark}

Since the work of the German mathematician Johann Faulhaber (1580-1635), it is known that $S_{2m-1}^{(r)}(n)$ [respectively, $S_{2m}^{(r)}(n)$] can alternatively be expressed as $S_{1}^{(r)}(n)$ [$S_{2}^{(r)}(n)$] times a polynomial in $n(n+r)$. This result was rigorously proved by Knuth (see \cite[Theorem, p. 280]{knuth}), and can be established as the following theorem.

\begin{theorem}[Faulhaber-Knuth]\label{th:1}
  For all positive integers $r$ and $m$, there exist polynomials $F_{2m-1}^{(r)}$ and $F_{2m}^{(r)}$ in $n(n+r)$ of degree $m-1$ such that
  \begin{equation*}
    S_{2m-1}^{(r)}(n) = S_{1}^{(r)}(n) \, F_{2m-1}^{(r)} \big(n(n+r) \big), \quad\text{and}\quad
    S_{2m}^{(r)}(n) = S_{2}^{(r)}(n) \, F_{2m}^{(r)} \big(n(n+r) \big).
  \end{equation*}
\end{theorem}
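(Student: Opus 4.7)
The plan is to combine a divisibility lemma with a symmetry lemma. Set $N_r := n + r/2$, so that $n(n+r) = N_r^2 - r^2/4$; polynomials in $n(n+r)$ are precisely the even polynomials in $N_r$. It therefore suffices to show that for every $m \geq 1$ the quotient $Q_m^{(r)}(n) := S_m^{(r)}(n)/S_1^{(r)}(n)$ is a polynomial in $n$ of degree $m-1$ whose parity in $N_r$ matches that of $m-1$.

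\emph{Divisibility.} First I would show that $S_m^{(r)}(n)$ vanishes at $n = 0, -1, \ldots, -r$, by induction on $r$. The base $r=0$ is immediate from $S_m^{(0)}(n) = n^m$. For $r \geq 1$, the defining recursion yields the finite-difference identity $S_m^{(r)}(n) - S_m^{(r)}(n-1) = S_m^{(r-1)}(n)$; by the inductive hypothesis its right-hand side vanishes at $n = 0, -1, \ldots, -(r-1)$, so $S_m^{(r)}$ is constant on $\{0, -1, \ldots, -r\}$, and that constant equals $S_m^{(r)}(0) = 0$ by \eqref{polyn}. Since $S_1^{(r)}(n) = n(n+1)\cdots(n+r)/(r+1)!$ has simple zeros exactly at those points, $Q_m^{(r)}$ is a polynomial of degree $(m+r)-(r+1) = m-1$.

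\emph{Symmetry.} Next I would establish the reflection identity $S_m^{(r)}(-n-r) = (-1)^{m+r} S_m^{(r)}(n)$, again by induction on $r$. For the inductive step, set $g(n) := S_m^{(r)}(n) - (-1)^{m+r} S_m^{(r)}(-n-r)$ and compute $\Delta g(n) := g(n+1)-g(n)$ by applying the finite-difference identity to each term; the inductive hypothesis for $S_m^{(r-1)}$ produces the cancellation that forces $\Delta g \equiv 0$, so $g$ is a constant polynomial. Evaluating at $n = 0$ and invoking the divisibility lemma ($S_m^{(r)}(0) = S_m^{(r)}(-r) = 0$) gives $g \equiv 0$. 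Specializing at $m = 1$ yields $S_1^{(r)}(-n-r) = (-1)^{r+1} S_1^{(r)}(n)$, and therefore $Q_m^{(r)}(-n-r) = (-1)^{m-1} Q_m^{(r)}(n)$.

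\emph{Conclusion.} With the theorem's parameter $m$: $Q_{2m-1}^{(r)}$ is an even polynomial in $N_r$ of degree $2m-2$, hence a polynomial in $n(n+r)$ of degree $m-1$, which is $F_{2m-1}^{(r)}$. And $Q_{2m}^{(r)}$ is an odd polynomial in $N_r$ of degree $2m-1$, hence divisible by $N_r$; since by \eqref{knuth} $S_2^{(r)}/S_1^{(r)} = 2N_r/(r+2)$ is itself odd in $N_r$, the ratio $S_{2m}^{(r)}/S_2^{(r)} = [(r+2)/(2N_r)]\,Q_{2m}^{(r)}$ is an even polynomial in $N_r$ of degree $2m-2$, hence a polynomial in $n(n+r)$ of degree $m-1$, which is $F_{2m}^{(r)}$. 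The main obstacle is the sign bookkeeping in the symmetry lemma's inductive step; all other pieces follow routinely from the finite-difference relation.
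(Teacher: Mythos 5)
Your proof is correct, and it takes a genuinely different route from the paper. The paper does not actually prove Theorem \ref{th:1} from scratch: it cites Knuth for this statement, and its own new argument for the parity phenomenon (Theorem \ref{th:5}) runs through the Bernoulli-number recurrence \eqref{lm:1}, using the vanishing of the odd-indexed Bernoulli numbers and a sign-tracking induction. You instead combine two elementary lemmas driven entirely by the finite-difference relation $S_m^{(r)}(n)-S_m^{(r)}(n-1)=S_m^{(r-1)}(n)$: the root-counting lemma that $S_m^{(r)}$ vanishes at $n=0,-1,\ldots,-r$ (so $S_1^{(r)}$ divides it with quotient of degree $m-1$), and the reflection identity $S_m^{(r)}(-n-r)=(-1)^{m+r}S_m^{(r)}(n)$, whose inductive step I have checked and whose signs do cancel as you claim; together with $n(n+r)=N_r^2-r^2/4$ and the oddness of $S_2^{(r)}/S_1^{(r)}=2N_r/(r+2)$, these give exactly the two assertions of the theorem, with the correct degrees. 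What your approach buys is self-containedness and economy: no Bernoulli numbers, no explicit recurrence among the $G_m^{(r)}$, and the reflection identity as a reusable byproduct. What it does not give -- and what the paper's heavier machinery in Theorem \ref{th:5} is designed to extract -- is the finer information that every coefficient $g_{m,j}^{(r)}$ is non-zero and that the coefficients alternate in sign; a pure symmetry argument can only constrain which powers of $N_r$ may appear, not which ones actually do. One small presentational point: when you invoke \eqref{polyn} to get $S_m^{(r)}(0)=0$, you could equally well read it off from the difference identity at $n=1$, keeping the argument independent of the explicit coefficient formula.
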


It is important to notice that, because of the relation $n(n+r) = \big( n +\frac{r}{2} \big)^2 - \frac{1}{4}r^2$, the above theorem can equivalently be stated as follows.

\begin{theorem}[Faulhaber-Knuth]\label{th:2}
  For all positive integers $r$ and $m$, there exist even polynomials $K_{2m-1}^{(r)}$ and $K_{2m}^{(r)}$ in $N_r = n + \frac{r}{2}$ of degree $2m-2$ such that
  \begin{equation*}
    S_{2m-1}^{(r)}(n) = S_{1}^{(r)}(n) \, K_{2m-1}^{(r)}(N_r), \quad\text{and}\quad
    S_{2m}^{(r)}(n) = S_{2}^{(r)}(n) \, K_{2m}^{(r)}(N_r).
  \end{equation*}
\end{theorem}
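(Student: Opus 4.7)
The plan is to deduce Theorem \ref{th:2} directly from Theorem \ref{th:1} via the elementary algebraic identity
\begin{equation*}
  n(n+r) = \bigl(n + \tfrac{r}{2}\bigr)^{2} - \tfrac{1}{4}r^{2} = N_r^{2} - \tfrac{1}{4}r^{2},
\end{equation*}
which the excerpt already highlights.

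First I would set, for each fixed $r \geq 1$,
\begin{equation*}
  K_{2m-1}^{(r)}(x) := F_{2m-1}^{(r)}\!\bigl(x^{2} - \tfrac{1}{4}r^{2}\bigr), \qquad
  K_{2m}^{(r)}(x) := F_{2m}^{(r)}\!\bigl(x^{2} - \tfrac{1}{4}r^{2}\bigr),
\end{equation*}
where $F_{2m-1}^{(r)}$ and $F_{2m}^{(r)}$ are the degree-$(m-1)$ polynomials furnished by Theorem \ref{th:1}. Substituting $x = N_r$ and using the identity above yields
\begin{equation*}
  K_{2m-1}^{(r)}(N_r) = F_{2m-1}^{(r)}\bigl(n(n+r)\bigr), \qquad
  K_{2m}^{(r)}(N_r) = F_{2m}^{(r)}\bigl(n(n+r)\bigr),
\end{equation*}
so that the factorizations of $S_{2m-1}^{(r)}(n)$ and $S_{2m}^{(r)}(n)$ promised by Theorem \ref{th:1} translate verbatim into the factorizations asserted in Theorem \ref{th:2}.

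It then remains only to check the two properties claimed for $K_{2m-1}^{(r)}$ and $K_{2m}^{(r)}$. Since each is obtained by composing a polynomial of degree $m-1$ with the quadratic $x \mapsto x^{2} - \frac{1}{4}r^{2}$, the resulting polynomial in $x$ has degree exactly $2(m-1) = 2m-2$. Moreover, by construction only even powers of $x$ appear, so $K_{2m-1}^{(r)}$ and $K_{2m}^{(r)}$ are even polynomials of degree $2m-2$ in $N_r$, as required.

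There is no real obstacle: once Theorem \ref{th:1} is taken for granted, Theorem \ref{th:2} is just a change of variable, and the substantive content (existence of the factorizations) lies entirely in Theorem \ref{th:1}. The only thing to verify carefully is the bookkeeping of the degrees and the parity, both of which follow immediately from the form of the substitution.
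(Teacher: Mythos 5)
Your proposal is correct and matches the paper's own (very brief) justification: the paper derives Theorem \ref{th:2} from Theorem \ref{th:1} precisely via the identity $n(n+r) = \bigl(n+\tfrac{r}{2}\bigr)^{2} - \tfrac{1}{4}r^{2}$, which is exactly the substitution you carry out. You have simply made explicit the degree and parity bookkeeping that the paper leaves to the reader.
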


\begin{remark}\label{rk:1}
Since $S_{2}^{(r)}(n)$ is proportional to $N_r  S_{1}^{(r)}(n)$, from Theorem \ref{th:2} it follows that $S_{2m}^{(r)}(n)$ can equally be expressed as $S_{1}^{(r)}(n)$ times an odd polynomial in $N_r$ of degree $2m-1$.
\end{remark}

In view of Theorem \ref{th:2} and Remark \ref{rk:1}, the Faulhaber-Knuth result can be formulated in another equivalent way, as stated in the next theorem.

\begin{theorem}[Faulhaber-Knuth]\label{th:3}
  For all positive integers $r$ and $m$, there exist polynomials $G_{m}^{(r)}$ in $N_r = n + \frac{r}{2}$ of degree $m-1$ such that
  \begin{equation}\label{polg}
    S_{m}^{(r)}(n) = S_{1}^{(r)}(n) \, G_{m}^{(r)}(N_r),
  \end{equation}
  where the $G_{m}^{(r)}$'s are even or odd, according as $m$ is odd or even.
\end{theorem}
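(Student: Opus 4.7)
The plan is to reduce Theorem \ref{th:3} directly to Theorem \ref{th:2} together with the explicit identity $S_{2}^{(r)}(n) = \frac{2n+r}{r+2}\,S_{1}^{(r)}(n) = \frac{2N_r}{r+2}\,S_{1}^{(r)}(n)$ from equation \eqref{knuth}. The key observation, already flagged in Remark \ref{rk:1}, is that the ratio $S_{2}^{(r)}(n)/S_{1}^{(r)}(n)$ is a polynomial of degree $1$ in $N_r$ and is odd in $N_r$. Hence the factor $S_{2}^{(r)}(n)$ appearing in the even-index case of Theorem \ref{th:2} can be converted into a factor $S_{1}^{(r)}(n)$ at the cost of one extra power of $N_r$, flipping the polynomial parity from even to odd and raising the degree by $1$.

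I would then carry out a case analysis on the parity of $m$. When $m=2\ell-1$ is odd, Theorem \ref{th:2} delivers
\[
  S_{2\ell-1}^{(r)}(n) = S_{1}^{(r)}(n)\, K_{2\ell-1}^{(r)}(N_r),
\]
with $K_{2\ell-1}^{(r)}$ an even polynomial of degree $2(\ell-1)=m-1$; I simply set $G_{m}^{(r)} := K_{2\ell-1}^{(r)}$, which is even and of degree $m-1$, as required. When $m=2\ell$ is even, Theorem \ref{th:2} gives
\[
  S_{2\ell}^{(r)}(n) = S_{2}^{(r)}(n)\, K_{2\ell}^{(r)}(N_r),
\]
with $K_{2\ell}^{(r)}$ even of degree $2\ell-2$. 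Substituting the identity $S_{2}^{(r)}(n) = \frac{2N_r}{r+2}\,S_{1}^{(r)}(n)$ yields
\[
  S_{2\ell}^{(r)}(n) = S_{1}^{(r)}(n)\left( \frac{2N_r}{r+2}\,K_{2\ell}^{(r)}(N_r) \right),
\]
and I would define $G_{m}^{(r)}(N_r) := \frac{2N_r}{r+2}\,K_{2\ell}^{(r)}(N_r)$. Being the product of an odd degree-$1$ factor with an even polynomial of degree $2\ell-2$, this $G_{m}^{(r)}$ is odd in $N_r$ and has degree $2\ell-1=m-1$, matching the statement.

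Since Theorems \ref{th:1} and \ref{th:2} are taken as known (attributed to Knuth), there is essentially no technical obstacle: the entire argument is a parity-and-degree bookkeeping built on top of Theorem \ref{th:2} and the closed form for $S_{2}^{(r)}(n)$ in \eqref{knuth}. The only point requiring a brief justification is the claim that $\frac{2N_r}{r+2}\,K_{2\ell}^{(r)}(N_r)$ is genuinely a polynomial of degree $2\ell-1$, i.e.\ that $K_{2\ell}^{(r)}$ is not the zero polynomial; but this is visible from the leading-term structure in $n$ of $S_{2\ell}^{(r)}(n)$, whose degree $2\ell+r$ forces $K_{2\ell}^{(r)}$ to have the expected maximal degree.
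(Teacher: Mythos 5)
Your proposal is correct and follows essentially the same route as the paper: the paper obtains Theorem \ref{th:3} from Theorem \ref{th:2} via Remark \ref{rk:1}, i.e.\ precisely by using $S_{2}^{(r)}(n)=\frac{2N_r}{r+2}S_{1}^{(r)}(n)$ to trade the $S_{2}^{(r)}$ factor for $S_{1}^{(r)}$ times an odd degree-one factor in $N_r$. Your added remark on why $K_{2m}^{(r)}$ is nonzero is a harmless extra precaution.
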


In this paper, we give a determinantal version of Theorem \ref{th:3}. More precisely, in section 2, we show that, for all integers $r \geq 0$ and $m\geq 1$, $S_{m}^{(r)}(n)$ can be expressed (leaving aside a factor involving $m$ and $r$) as $S_{1}^{(r)}(n)$ times a determinant of order $m-1$ depending on $N_r$ (see Theorem \ref{th:4} below). In other words, we provide an explicit determinantal formula for the polynomials $G_m^{(r)}(N_r)$. Furthermore, in section 3, we offer a new proof of the fact that, whenever $r \geq 1$, the polynomials $G_m^{(r)}(N_r)$ are even or odd depending on the parity of $m$ (see Theorem \ref{th:5} below).

\section{A determinantal formula for $S_{m}^{(r)}(n)$}

Next, we establish the following theorem which constitutes the main result of this paper.

\begin{theorem}\label{th:4}
  Let $N_r = n + \frac{r}{2}$. Then, for all integers $r \geq 0$ and $m \geq 1$, $S_{m}^{(r)}(n)$ can be expressed as
  \begin{equation}\label{det}
    S_{m}^{(r)}(n) = S_{1}^{(r)}(n) \frac{(-1)^{m-1}}{(r+2)^{\overline{m-1}}} \det{H_m^{(r)}(N_r)},
  \end{equation}
  where $r^{\overline{m}}= r(r+1)\ldots (r+m-1)$ and $r^{\overline{0}} =1$, and where $H_m^{(r)}(N_r)$ is the following lower Hessenberg matrix of order $m-1$:
  \begin{equation}\label{det1}
  H_m^{(r)}(N_r) =
  \begin{pmatrix}
  -2N_r & \!\! r+2 & \!\! 0 & \!\! \hdots & \!\! \hdots & \!\! 0 \\[3pt]
  r\binom{3}{1}B_2 & \!\! -3N_r & \!\! r+3 & \!\! 0 & \!\! \hdots & \!\! 0 \\[3pt]
  r\binom{4}{1}B_3 & \!\! r\binom{4}{2}B_2 & \!\! -4N_r & \!\! r+4 & \!\! \ddots & \!\! 0  \\[3pt]
  \vdots & \!\! \vdots & \!\! \ddots  & \!\! \ddots & \!\! \ddots & \!\! 0 \\[3pt]
  r\binom{m-1}{1}B_{m-2} & \!\! r\binom{m-1}{2}B_{m-3} & \!\! r\binom{m-1}{3}B_{m-4} & \!\! \hdots & \!\! -(m-1)N_r &
   \!\! r+m-1 \\[5pt]
  r\binom{m}{1}B_{m-1} & \!\! r\binom{m}{2}B_{m-2} & \!\! r\binom{m}{3}B_{m-3} & \!\! \hdots &  \! r\binom{m}{m-2}B_2 &
   \!\! -mN_r
\end{pmatrix},
\end{equation}
  with the $B_j$'s denoting the Bernoulli numbers.
\end{theorem}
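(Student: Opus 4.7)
My plan is to prove Theorem~\ref{th:4} by induction on $m$. The base case $m=1$ is immediate: $H_1^{(r)}(N_r)$ is the empty $0\times 0$ matrix, $\det H_1^{(r)}=1$, and $(r+2)^{\overline{0}}=1$, so \eqref{det} collapses to $S_1^{(r)}(n)=S_1^{(r)}(n)$.

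The heart of the argument is the auxiliary recurrence
\begin{equation*}
(r+m)\,S_m^{(r)}(n) \;=\; m\,N_r\,S_{m-1}^{(r)}(n) \;-\; r\sum_{j=1}^{m-2}\binom{m}{j}B_{m-j}\,S_j^{(r)}(n), \tag{$\star$}
\end{equation*}
valid for all $r\ge 0$ and $m\ge 2$; note that its coefficients are precisely the entries of the last row of $H_m^{(r)}(N_r)$. For $r=0$ it reduces to $m\,n^m=m\,n\cdot n^{m-1}$. For $r\ge 1$ I would substitute the combinatorial form $S_m^{(r)}(n)=\sum_{k=1}^{n}k^m\binom{n+r-1-k}{r-1}$, and use $B_0=1$ and $B_1=-\tfrac12$ to absorb the $(r+m)k^m$ and $-mN_r k^{m-1}$ contributions into a single Bernoulli sum, so that the integrand becomes $m\,k^{m-1}(k-n)+r\bigl[B_m(k)-B_m\bigr]$, where $B_m(x)$ is the Bernoulli polynomial. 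The identities $(n-k)\binom{n+r-1-k}{r-1}=r\binom{n+r-1-k}{r}$ and $B_m(k)-B_m=m\,S_{m-1}(k-1)$ then reduce $(\star)$ to the combinatorial equality $\sum_{k=1}^{n}\binom{n+r-1-k}{r-1}S_{m-1}(k-1)=\sum_{k=1}^{n}\binom{n+r-1-k}{r}k^{m-1}$, which follows by swapping the order of summation on the left and applying the hockey-stick identity $\sum_{k=l+1}^{n}\binom{n+r-1-k}{r-1}=\binom{n+r-1-l}{r}$.

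With $(\star)$ in hand, the induction closes via cofactor expansion of $\det H_m^{(r)}(N_r)$ along its last row. For $j<m-1$ the minor $M_{m-1,j}$ is block lower-triangular: its top-left $(j-1)\times(j-1)$ block coincides with $H_j^{(r)}(N_r)$, while its bottom-right block is lower triangular with diagonal entries $r+j+1,\ldots,r+m-1$; hence $M_{m-1,j}=\det H_j^{(r)}(N_r)\cdot\prod_{i=j+1}^{m-1}(r+i)$. Substituting the inductive hypothesis $\det H_k^{(r)}(N_r)=(-1)^{k-1}(r+2)^{\overline{k-1}}\,S_k^{(r)}(n)/S_1^{(r)}(n)$, the $(r+j)!$ factors telescope, the signs align to a global $(-1)^{m-1}$, and the resulting bracket is exactly the right-hand side of $(\star)$. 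Invoking $(\star)$ converts that bracket into $(r+m)\,S_m^{(r)}(n)$, and the surviving prefactor combines with $r+m$ to yield $(-1)^{m-1}(r+2)^{\overline{m-1}}$, matching \eqref{det}. The principal obstacle is establishing $(\star)$; the remaining sign and factorial bookkeeping is routine, and along the way one may sanity-check the case $m=2$ to recover $S_2^{(r)}(n)=\tfrac{2N_r}{r+2}S_1^{(r)}(n)$, in agreement with \eqref{knuth}.
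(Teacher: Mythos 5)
Your proposal is correct, and it is structurally close to the paper's proof---both arguments hinge on exactly the same recurrence, your $(\star)$ being the paper's Lemma \eqref{lm:1}, whose coefficients populate the rows of $H_m^{(r)}(N_r)$---but you establish that recurrence by a genuinely different route. The paper proves \eqref{lm:1} by summing the Bernoulli formula for $mS_{m-1}(i)$ over $i$, iterating the summation $r$ times to reach \eqref{lm:2}, and then eliminating $S_{m-1}^{(r+1)}(n)$ via the recurrence \eqref{rec} (itself proved in the appendix); your derivation instead starts from the closed form $S_m^{(r)}(n)=\sum_{k=1}^{n}k^m\binom{n+r-1-k}{r-1}$, packages the row coefficients into $m k^{m-1}(k-n)+r\bigl[B_m(k)-B_m\bigr]$ via $B_0=1$, $B_1=-\tfrac12$, and finishes with the absorption identity $(n-k)\binom{n+r-1-k}{r-1}=r\binom{n+r-1-k}{r}$, the relation $B_m(k)-B_m=mS_{m-1}(k-1)$, and the hockey-stick identity; I checked the algebra and it closes (the $k=n$ term on the right vanishes because $\binom{r-1}{r}=0$). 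Your version is self-contained and avoids any appeal to \eqref{rec}, at the cost of needing the closed form for $S_m^{(r)}(n)$ (a one-line induction on $r$ that you should state explicitly); the paper's version leans on previously published machinery but requires less computation. For the passage from the recurrence to the determinant, the two treatments are mathematically equivalent: the paper assembles the relations $j=2,\dots,m$ into a lower-triangular linear system, applies Cramer's rule, and expands the resulting determinant along its last column, while you run a strong induction on $m$ with cofactor expansion of $\det H_m^{(r)}(N_r)$ along its last row, using the block lower-triangular structure of the minors $M_{m-1,j}=\det H_j^{(r)}(N_r)\prod_{i=j+1}^{m-1}(r+i)$; the sign and Pochhammer bookkeeping you describe is correct and the telescoping delivers \eqref{det}.
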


\begin{remark}
It is understood that $\det{H_1^{(r)}(N_r)} =1$, so that \eqref{det} holds trivially for $m=1$. On the other hand, it is easily seen that $\det{H_m^{(0)}(N_0)} = (-1)^{m-1} 2^{\overline{m-1}} n^{m-1}$, and then \eqref{det} gives us $S_{m}^{(0)}(n) = n^m$, as it should. Furthermore, from \eqref{det} it follows that $S_{2}^{(r)}(n) = \frac{2n+r}{r+2} S_1^{(r)}(n)$, in accordance with \eqref{knuth}.
\end{remark}

\begin{remark}
From \eqref{polg} and \eqref{det}, it follows immediately that, for all integers $r \geq 0$ and $m\geq 1$, the polynomials $G_{m}^{(r)}(N_r)$ are given by the determinantal formula
\begin{equation}\label{detg}
  G_{m}^{(r)}(N_r) = \frac{(-1)^{m-1}}{(r+2)^{\overline{m-1}}} \det{H_m^{(r)}(N_r)},
\end{equation}
where $H_m^{(r)}(N_r)$ is the matrix \eqref{det1}, and $G_1^{(r)}(N_r) =1$.
\end{remark}

The proof of Theorem \ref{th:4} is based on the following lemma.

\begin{lemma}
Let $N_r = n + \frac{r}{2}$. Then, for all integers $r \geq 0$ and $m \geq 2$, the following recurrence relation holds true
  \begin{equation}\label{lm:1}
    (m+r) S_{m}^{(r)}(n) = m N_r S_{m-1}^{(r)}(n) - r \sum_{k=1}^{m-2} \binom{m}{k} B_{m-k} S_{k}^{(r)}(n),
  \end{equation}
where the summation on the right-hand side is zero if $m=2$.
\end{lemma}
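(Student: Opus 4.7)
My plan is to pass to exponential generating functions and convert the claim into a first-order linear ODE that can be verified directly from a closed form.

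Let $\phi_r(x;n) := \sum_{m \geq 0} S_m^{(r)}(n)\, x^m/m!$. Iterating the defining relation $\phi_r(x;n) = \sum_{i=1}^n \phi_{r-1}(x;i)$ from $\phi_0(x;n) = e^{nx}$, an induction on $r$ (using the hockey-stick identity to resum the binomial coefficients) yields the closed form
\begin{equation*}
\phi_r(x;n) = \frac{e^{(n+r)x} - e^x \sum_{j=0}^{r-1}\binom{n+r-1}{j}(e^x-1)^j}{(e^x-1)^r}.
\end{equation*}

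I would next multiply \eqref{lm:1} by $x^m/m!$ and sum over $m \geq 2$. Standard EGF manipulations translate the first two terms into $x\phi_r' + r\phi_r$ and $N_r x \phi_r$, respectively, and the Bernoulli convolution becomes $(\phi_r - S_0^{(r)})\bigl(\tfrac{x}{e^x-1} - 1 + \tfrac{x}{2}\bigr)$ after the $k=0, m-1, m$ boundary terms are peeled off. Using the elementary identities $(r+1)S_1^{(r)}(n) = (n+r)S_0^{(r)}(n)$ and $N_r - r/2 = n$ to handle the leftover low-order pieces, the claim collapses to the single first-order linear ODE
\begin{equation*}
\phi_r'(x;n) + \left[\frac{r}{e^x-1} - n\right]\phi_r(x;n) = r\, S_0^{(r)}(n)\,\frac{e^x}{e^x-1}. \qquad (\star)
\end{equation*}

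Finally, writing $\phi_r = A_r/(e^x-1)^r$ with $A_r$ the numerator in the closed form, $(\star)$ clears to the polynomial identity $A_r'(x) - (n+r)A_r(x) = r\,S_0^{(r)}(n)\,e^x(e^x-1)^{r-1}$. Differentiating $A_r$ and reindexing with $(j+1)\binom{n+r-1}{j+1} = (n+r-1-j)\binom{n+r-1}{j}$ causes the two resulting $j$-sums to cancel termwise except for the $j = r-1$ contribution $n\binom{n+r-1}{r-1}\,e^x (e^x-1)^{r-1}$; this equals the right-hand side because $n\binom{n+r-1}{r-1} = r\binom{n+r-1}{r} = r S_0^{(r)}(n)$. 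The main obstacle is managing the telescoping without losing boundary terms; once the reindexing is organized and one notes that $\sum_{j=0}^{r-1}\binom{r-1}{j}(e^x-1)^j = e^{(r-1)x}$ absorbs the terms coming from the upper limit of the derivative sum, the cancellation is automatic.
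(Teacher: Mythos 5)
Your argument is correct, but it takes a genuinely different route from the paper's. I checked the key steps: the closed form for $\phi_r$ does follow by induction on $r$ (the hockey-stick identity gives $\sum_{i=1}^n\binom{i+r-2}{j}=\binom{n+r-1}{j+1}-\binom{r-1}{j+1}$, and it is precisely here --- in absorbing the resulting terms $-e^{rx}+e^{x}$ --- that the identity $\sum_{j}\binom{r-1}{j}(e^x-1)^j=e^{(r-1)x}$ is actually needed, rather than in the final telescoping); the low-order pieces do cancel via $(r+1)S_1^{(r)}(n)=(n+r)S_0^{(r)}(n)$ and $N_r-\tfrac r2=n$, so the family of identities \eqref{lm:1} over all $m\ge 2$ is equivalent to your ODE $(\star)$; and the polynomial identity $A_r'-(n+r)A_r=r\,S_0^{(r)}(n)\,e^x(e^x-1)^{r-1}$ checks out, the sole surviving boundary term after reindexing being $n\binom{n+r-1}{r-1}e^x(e^x-1)^{r-1}$ with $n\binom{n+r-1}{r-1}=r\binom{n+r-1}{r}=rS_0^{(r)}(n)$. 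The paper instead argues in two lines: it applies the $r$-fold summation operator (which is linear) to the classical Bernoulli identity $mS_{m-1}(i)=i^m+\tfrac m2 i^{m-1}+\sum_{k=1}^{m-2}\binom mk B_{m-k}i^k$ to obtain \eqref{lm:2}, and then eliminates $S_{m-1}^{(r+1)}(n)$ by means of the recurrence \eqref{rec}. The paper's route is shorter and entirely elementary, at the cost of leaning on \eqref{rec}; your route is independent of \eqref{rec}, treats all $m$ simultaneously, and yields the closed-form generating function $\phi_r(x;n)$ as a byproduct of independent interest, but it is computationally heavier and needs a word of care about the formal Laurent series $\tfrac{r}{e^x-1}\phi_r$ (harmless, since $\phi_r-S_0^{(r)}(n)e^x$ vanishes at $x=0$, or since one may work throughout with $x\cdot(\star)$).
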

\begin{proof}
We start with the Bernoulli formula for the power sum polynomial $S_{m-1}(i) = \sum_{j=1}^i j^{m-1}$, namely,
\begin{equation*}
  S_{m-1}(i) = \frac{1}{m} i^m + \frac{1}{2} i^{m-1} + \frac{1}{m} \sum_{k=1}^{m-2} \binom{m}{k} B_{m-k} i^k,
\end{equation*}
which applies to $m \geq 2$. Therefore, it follows that
\begin{equation*}
  m S_{m-1}(i) = i^m + \frac{1}{2} m \, i^{m-1} + \sum_{k=1}^{m-2} \binom{m}{k} B_{m-k} i^k.
\end{equation*}
Summing from $i=1$ to $n$ on both sides of this equation yields
\begin{equation*}
  m S_{m-1}^{(2)}(n) = S_m(n) + \frac{1}{2} m S_{m-1}(n) + \sum_{k=1}^{m-2} \binom{m}{k} B_{m-k} S_k(n).
\end{equation*}
Clearly, by iterating the procedure $r$ times, we get
\begin{equation}\label{lm:2}
  m S_{m-1}^{(r+1)}(n) = S_m^{(r)}(n) + \frac{1}{2} m S_{m-1}^{(r)}(n) + \sum_{k=1}^{m-2} \binom{m}{k} B_{m-k}
  S_k^{(r)}(n).
\end{equation}
Now, from \eqref{rec} we have
\begin{equation}\label{lm:3}
  S_{m-1}^{(r+1)}(n) = \frac{n+r}{r} S_{m-1}^{(r)}(n) - \frac{1}{r} S_{m}^{(r)}(n),
\end{equation}
and hence, using \eqref{lm:3} in \eqref{lm:2}, we quickly obtain \eqref{lm:1}.
\end{proof}

For the proof of Theorem \ref{th:4}, rewrite \eqref{lm:1} in the form
\begin{equation}\label{lm:4}
    (r+j) S_{j}^{(r)}(n) = j N_r S_{j-1}^{(r)}(n) - r \sum_{k=1}^{j-2} \binom{j}{k} B_{j-k} S_{k}^{(r)}(n),
\end{equation}
for integers $r \geq 0$ and $j \geq 2$. Thus, letting successively $j =2,3,4\ldots,m$ in \eqref{lm:4} gives rise to the following system of equations in the unknowns $S_1^{(r)}(n), S_2^{(r)}(n)$, \ldots, $S_m^{(r)}(n)$:
\begin{equation*}
\begin{aligned}
-2& N_r S_1^{(r)}(n) + (r+2) S_2^{(r)}(n) =0, \\
& \,\,\,\, r\binom{3}{1}B_2 S_1^{(r)}(n) -3N_r S_2^{(r)}(n) + (r+3)S_3^{(r)}(n) = 0,  \\
& \,\,\,\, r\binom{4}{1}B_3 S_1^{(r)}(n) + r\binom{4}{2}B_2 S_2^{(r)}(n) -4N_r S_3^{(r)}(n) + (r+4)S_4^{(r)}(n) =0, \\
&  \,\,\,\,\,\, \quad \vdots \\
& \,\,\,\, r\binom{m}{1}B_{m-1} S_1^{(r)}(n) + r\binom{m}{2}B_{m-2} S_2^{(r)}(n) + \cdots + r\binom{m}{m-2}B_2 S_{m-2}^{(r)}(n) \\
& \quad\quad\quad\quad\quad\quad\quad\quad\quad\quad\quad\quad\quad\quad\quad\quad\quad\quad\quad\quad\quad \,\,\,\,\,
-m N_r S_{m-1}^{(r)}(n) + (r+m) S_{m}^{(r)}(n)=0.
\end{aligned}
\end{equation*}
This system of equations augmented with the trivial relationship $(r+1)S_1^{(r)}(n) = (r+1)S_1^{(r)}(n)$ can be written in matrix form as
\begin{equation*}
\!
\begin{pmatrix} r+1 & 0 & 0 & \! \hdots & 0 & 0 \\[3pt]
-2N_r & r+2 & 0 & \! \hdots & 0 & 0 \\[3pt]
r\binom{3}{1}B_2 & -3N_r & r+3 & \! \hdots  & 0 & 0 \\[3pt]
\vdots & \vdots & \ddots & \! \ddots & \vdots & \vdots \\[3pt]
r\binom{m-1}{1}B_{m-2} & \! r\binom{m-1}{2}B_{m-3} & \! r\binom{m-1}{3}B_{m-4} & \!\hdots & \! r+m-1 & 0\\[5pt]
r\binom{m}{1}B_{m-1} & r\binom{m}{2}B_{m-2} & r\binom{m}{3}B_{m-3} & \! \hdots & -m N_r & r+m
\end{pmatrix}\!
\begin{pmatrix} S_1^{(r)}(n) \\[2pt] S_2^{(r)}(n) \\[2pt] S_3^{(r)}(n) \\ \vdots \\
S_{m-1}^{(r)}(n) \\[1pt] S_m^{(r)}(n)
\end{pmatrix}\!
= \! \begin{pmatrix}
(r+1)S_1^{(r)}(n) \\[4pt] 0 \\[4pt] 0 \\ \vdots \\[3pt] 0 \\[3pt] 0
\end{pmatrix}.
\end{equation*}
Thus, solving for $S_m^{(r)}(n)$ and applying Cramer's rule to the above $m \times m$ lower triangular system of linear
equations yields
\begin{equation*}
S_m^{(r)}(n)= \frac{1}{(r+1)^{\overline{m}}}
\begin{vmatrix}
r+1 &  0 & 0 & \! \hdots &  0 & \!\!\!\!(r+1)S_1^{(r)}(n) \\[3pt]
-2N_r & r+2 & 0 & \!\hdots &  0 & \!\! 0 \\[3pt]
r\binom{3}{1}B_2 & -3N_r & r+3 & \!\hdots &  0 & \!\! 0 \\[3pt]
\vdots & \vdots & \ddots  & \! \ddots & \vdots & \!\! \vdots \\[3pt]
r\binom{m-1}{1}B_{m-2} & \! r\binom{m-1}{2}B_{m-3} & \! r\binom{m-1}{3}B_{m-4} & \! \hdots & \! r+m-1 & \!\! 0\\[5pt]
r\binom{m}{1}B_{m-1} & r\binom{m}{2}B_{m-2} & r\binom{m}{3}B_{m-3} & \! \hdots & -mN_r & \!\! 0
\end{vmatrix}.
\end{equation*}
Finally, by expanding the above determinant with respect to the last column, we get \eqref{det}.

As a concrete example, next we quote the results obtained from \eqref{detg} for $G_{5}^{(7)}(N_7)$ and $G_{6}^{(7)}(N_7)$, namely,
\begin{align*}
G_5^{(7)}(N_7) & = \frac{1}{9^{\overline{4}}}
\begin{vmatrix}
-2N_7 & 9 & 0 & 0 \\[3pt]
\frac{7}{2} & -3N_7 & 10 & 0 \\[3pt]
0 & 7 & -4N_7 & 11 \\[3pt]
-\frac{7}{6} & 0 & \frac{35}{3} & -5N_7 \\[3pt]
\end{vmatrix} = \frac{N_7^4}{99} - \frac{35N_7^2}{198} + \frac{7}{16}, \\[-3mm]
\intertext{and}
G_6^{(7)}(N_7) & = -\frac{1}{9^{\overline{5}}}
\begin{vmatrix}
-2N_7 & 9 & 0 & 0 & 0 \\[3pt]
\frac{7}{2} & -3N_7 & 10 & 0 & 0 \\[3pt]
0 & 7 & -4N_7 & 11 & 0 \\[3pt]
-\frac{7}{6} & 0 & \frac{35}{3} & -5N_7 & 12 \\[3pt]
0 & -\frac{7}{2} & 0 & \frac{35}{2} & -6N_7 \\[3pt]
\end{vmatrix} = \frac{2N_7^5}{429} - \frac{49 N_7^3}{429} + \frac{6419 N_7}{10296},
\end{align*}
from which it follows that
\begin{align*}
  S_5^{(7)}(n) & = \frac{1}{1584}\binom{n+7}{8} \left[ 16 \left(n+ \frac{7}{2}\right)^4 - 280 \left(n+ \frac{7}{2}\right)^2
  + 693 \right], \\[-3mm]
\intertext{and}
  S_6^{(7)}(n) & = \frac{1}{10296}\binom{n+7}{8} \left[ 48 \left(n+ \frac{7}{2}\right)^5 - 1176 \left(n+ \frac{7}{2}\right)^3
  + 6419 \left(n+ \frac{7}{2}\right)\right],
\end{align*}
respectively.

As another simple example, let us observe that, for $m=3$, equation \eqref{det} gives us
\begin{equation*}
  S_3^{(r)}(n) = \frac{S_1^{(r)}(n)}{(r+2)(r+3)}
  \begin{vmatrix}
-2N_r & r+2 \\[3pt]
\frac{1}{2}r & -3N_r \\[3pt]
\end{vmatrix}
=  \binom{n+r}{r+1} \frac{6n^2 +6rn + r(r-1)}{(r+2)(r+3)}.
\end{equation*}
Of course, when $r=1$, the last formula reduces to the well-known identity
\begin{equation*}
  S_3(n) = 1^3 + 2^3 + \cdots + n^3 = \binom{n+1}{2}^2.
\end{equation*}

\begin{remark}
Clearly, the term of maximum degree in $N_r$ of the polynomial $G_m^{(r)}(N_r)$ corresponds to the product of the entries on the main diagonal of \eqref{det1}. Thus, using \eqref{det}, and recalling that $N_r = n + \frac{r}{2}$ and $S_1^{(r)}(n) = \binom{n+r}{r+1}$, it is easily seen that the leading coefficient of the hyper-sum polynomial \eqref{polyn} is given by $c_{m,r}^{m+r} = \frac{m!}{(m+r)!}$, which holds for any integers $m \geq 0$ and $r \geq 1$. In particular, for $r=1$, we retrieve the well-known result $c_{m,1}^{m+1} = \frac{1}{m+1}$.
\end{remark}

\section{A further characterization of the polynomials $G_{m}^{(r)}(N_r)$}

For $r=0$, the polynomial $G_{m}^{(r)}(N_r)$ is equal to $G_{m}^{(0)}(N_0) = N_0^{m-1}$ for any integer $m \geq 1$. However, whenever $r \geq 1$, $G_{m}^{(r)}(N_r)$ involves powers of $N_r$ other than $m-1$. In this section, we show that, for all integers $r \geq 1$ and $m \geq 1$, the polynomials $G_{m}^{(r)}(N_r)$ are necessarily of the form given by the following theorem.

\begin{theorem}\label{th:5}
Let $N_r = n + \frac{r}{2}$ and $S_{m}^{(r)}(n) = S_{1}^{(r)}(n)\, G_{m}^{(r)}(N_r)$. Then, for all integers $r \geq 1$ and $m \geq 1$, we have
  \begin{align}
  G_{2m-1}^{(r)}(N_r) & = \sum_{j=0}^{m-1} g_{2m-1,j}^{(r)} \, N_r^{2j}, \label{polgo} \\[-5mm]
  \intertext{and}
   G_{2m}^{(r)}(N_r) & = \sum_{j=0}^{m-1} g_{2m,j}^{(r)} \, N_r^{2j+1}, \label{polge}
  \end{align}
for certain non-zero (rational) coefficients $g_{2m-1,j}^{(r)}$ and $g_{2m,j}^{(r)}$, $j =0,1,\ldots, m-1$. Furthermore, the coefficients $\{ g_{2m-1,m-1}^{(r)}, g_{2m-1,m-2}^{(r)}, \ldots, g_{2m-1,0}^{(r)} \}$ have alternating signs, with the sign of the leading coefficient $g_{2m-1,m-1}^{(r)}$ being positive. The same happens with the coefficients in the set $\{ g_{2m,m-1}^{(r)}, g_{2m,m-2}^{(r)}, \ldots, g_{2m,0}^{(r)} \}$.
\end{theorem}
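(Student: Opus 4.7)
The plan is to prove Theorem \ref{th:5} by induction on $m$, working directly from the recurrence in the lemma above rather than from the determinantal expression. Dividing \eqref{lm:1} through by $S_1^{(r)}(n)$ and using that $B_j = 0$ for odd $j \geq 3$, only indices $k$ with $k \equiv m \pmod{2}$ survive on the right-hand side; setting $k = m-2l$ with $l \geq 1$ rewrites the recurrence as
\[
(m+r)\, G_m^{(r)}(N_r) \;=\; m N_r\, G_{m-1}^{(r)}(N_r) \;-\; r \sum_{l=1}^{\lfloor (m-1)/2\rfloor} \binom{m}{2l} B_{2l}\, G_{m-2l}^{(r)}(N_r).
\]
This identity is the engine of the whole argument.

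First, I would establish the parity assertion $G_m^{(r)}(-N_r) = (-1)^{m-1} G_m^{(r)}(N_r)$ by induction on $m$, which immediately yields the shapes \eqref{polgo} and \eqref{polge}. The base cases $G_1^{(r)} = 1$ and $G_2^{(r)} = \tfrac{2N_r}{r+2}$ are immediate. In the inductive step, both $N_r\, G_{m-1}^{(r)}(N_r)$ and every $G_{m-2l}^{(r)}(N_r)$ transform by the factor $(-1)^{m-1}$ under $N_r \mapsto -N_r$, so the right-hand side does too, which completes the induction.

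Next, I would prove sign-alternation and non-vanishing by equating the coefficients of $N_r^{2j}$ (for odd $m = 2M+1$) or $N_r^{2j+1}$ (for even $m = 2M$) in the recurrence and inducting again on $m$. The crux is a clean sign bookkeeping: $B_{2l}$ carries sign $(-1)^{l+1}$, so $-r B_{2l}$ carries sign $(-1)^l$. Combined with the inductive hypothesis that $g_{k,i}^{(r)}$ has sign $(-1)^{\lfloor (k-1)/2 \rfloor - i}$, every surviving contribution to $(m+r)\, g_{m,j}^{(r)}$ acquires exactly the sign $(-1)^{\lfloor (m-1)/2\rfloor - j}$---the $(-1)^l$ from the Bernoulli factor is precisely absorbed by the $(-1)^l$ shift in the induction index of $g_{m-2l,j}^{(r)}$. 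Because all contributions share a single sign, no cancellation can occur, which simultaneously pins down the sign of $g_{m,j}^{(r)}$ and secures its non-vanishing as soon as one summand is known to be nonzero.

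The main technical point is the boundary case $j = 0$ in the odd-$m$ situation, where the ``leading'' contribution $m N_r\, G_{m-1}^{(r)}$ does not reach the constant term and only the Bernoulli sum is available. Here I would single out the $l = M$ term in the recurrence (with $m = 2M+1$), which equals $-r(2M+1) B_{2M}\, g_{1,0}^{(r)} = -r(2M+1) B_{2M}$; this is nonzero for $r \geq 1$ and $M \geq 1$ since $B_{2M} \neq 0$. Together with the no-cancellation principle above, this guarantees $g_{2M+1,0}^{(r)} \neq 0$ with the predicted sign, closing the induction.
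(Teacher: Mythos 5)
Your proposal is correct and follows essentially the same route as the paper: the same recurrence from the Lemma, the same complete induction, the vanishing of odd-index Bernoulli numbers to restrict the sum, and the same sign bookkeeping via $\operatorname{sign} B_{2l}=(-1)^{l+1}$ leading to a same-sign/no-cancellation conclusion (including the special treatment of the constant term for odd $m$). The only differences are presentational: you extract the parity first via $N_r\mapsto -N_r$ and unify the odd/even cases with a single sign formula $(-1)^{\lfloor (k-1)/2\rfloor-i}$, whereas the paper writes out the odd and even coefficient recurrences separately.
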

\begin{proof}
The theorem can be easily proved by means of the recurrence \eqref{lm:1} and the use of complete mathematical induction. To this end, we need the following couple of well-known properties of the Bernoulli numbers:
\begin{description}
\item[Property 1]: $B_{2j+1} =0$, for all $j =1,2,\ldots \, $,
\item[Property 2]: $\text{sign}\,  B_{2j} = (-1)^{j+1}$, for all $j =1,2,\ldots \,$.
\end{description}

First we show the validity of \eqref{polgo}.  Putting $S_{2m-1}^{(r)}(n) = S_1^{(r)}(n)\, G_{2m-1}^{(r)}(N_r)$ in \eqref{lm:1}, and factoring out the term $S_1^{(r)}(n)$, we obtain
\begin{equation*}
    (2m-1+r) G_{2m-1}^{(r)}(N_r) = (2m-1) N_r G_{2m-2}^{(r)}(N_r) - r \sum_{k=1}^{2m-3} \binom{2m-1}{k} B_{2m-k-1} G_{k}^{(r)}(N_r).
\end{equation*}
By Property 1, this can be expressed equivalently as
\begin{equation}\label{rec3}
    (2m-1+r) G_{2m-1}^{(r)}(N_r) = (2m-1) N_r G_{2m-2}^{(r)}(N_r) - r \sum_{k=1}^{m-1} \binom{2m-1}{2k-1} B_{2m-2k} G_{2k-1}^{(r)}(N_r),
\end{equation}
which applies to any integers $r\geq 0$ and $m \geq 2$. When $r=0$, \eqref{rec3} reduces to $G_{2m-1}^{(0)}(N_0) = N_0 G_{2m-2}^{(0)}(N_0)$, and then, assuming that $G_{2m-2}^{(0)}(N_0) = N_0^{2m-3}$, we get $G_{2m-1}^{(0)}(N_0) = N_0^{2m-2}$. Next, we consider the non-trivial case where $r$ is any arbitrary integer $r \geq 1$. Thus, the recurrence \eqref{rec3} gives us $G_{2m-1}^{(r)}(N_r)$ in terms of the lower-degree polynomials $G_{1}^{(r)}(N_r), G_{3}^{(r)}(N_r), \ldots, G_{2m-3}^{(r)}(N_r)$, and $G_{2m-2}^{(r)}(N_r)$. As the complete inductive hypothesis, we assume that $G_{2k-1}^{(r)}(N_r)$ is of the form \eqref{polgo} for all $k =1,2,\ldots,m-1$. Also, we assume that $G_{2k}^{(r)}(N_r)$ is of the form \eqref{polge} for $k =m-1$. (Please note that, when we say that a polynomial ``is of the form'' \eqref{polgo} or \eqref{polge}, it is understood that the corresponding coefficients are such that they satisfy the associated properties stipulated after equation \eqref{polge}.) Therefore, we have
\begin{align*}
    (2m-1+r) G_{2m-1}^{(r)}(N_r) & = (2m-1) N_r \sum_{j=0}^{m-2} g_{2m-2,j}^{(r)} \, N_r^{2j+1} \\[-2mm]
    & \quad\quad -r \sum_{k=1}^{m-1} \binom{2m-1}{2k-1} B_{2m-2k} \sum_{j=0}^{k-1} g_{2k-1,j}^{(r)} \, N_r^{2j} \\[-2mm]
    & = (2m-1) g_{2m-2,m-2}^{(r)} N_r^{2m-2} + (2m-1) \sum_{j=1}^{m-2} g_{2m-2,j-1}^{(r)} \, N_r^{2j} \\[-1mm]
    &  \quad\quad -r \sum_{j=0}^{m-2} \left[ \sum_{k=j+1}^{m-1} \binom{2m-1}{2k-1} B_{2m-2k} \, g_{2k-1,j}^{(r)}
    \right] N_r^{2j},
\end{align*}
from which we deduce that $G_{2m-1}^{(r)}(N_r) = \sum_{j=0}^{m-1} g_{2m-1,j}^{(r)} \, N_r^{2j}$, where
\vspace{2mm}
  \begin{subnumcases}{g_{2m-1,j}^{(r)} =}
    \displaystyle \, \frac{2m-1}{2m-1+r} \, g_{2m-2,m-2}^{(r)}, \,\,\, \text{for}
     \,\, j = m-1; \label{case1} \\[1mm]
    \displaystyle \, \frac{1}{2m-1+r} \bigg[(2m-1) g_{2m-2,j-1}^{(r)}
     -r \sum_{k=j+1}^{m-1} \binom{2m-1}{2k-1} B_{2m-2k} \, g_{2k-1,j}^{(r)} \bigg], \label{case2} \\[-1mm]
       \qquad\qquad\qquad\qquad\qquad\qquad\qquad\qquad\qquad\qquad\qquad\quad \text{for} \,\, j = 1,2,\ldots,
     m-2; \notag \\
    \displaystyle \, -\frac{r}{2m-1+r} \sum_{k=1}^{m-1} \binom{2m-1}{2k-1} B_{2m-2k} \, g_{2k-1,0}^{(r)},
       \,\,\, \text{for} \,\, j=0. \label{case3}
  \end{subnumcases}
Examining the three-case equation above, we first note that the coefficient $g_{2m-2,m-2}^{(r)}$ in \eqref{case1} corresponds to the leading coefficient of $G_{2m-2}^{(r)}(N_r)$ which, by the induction hypothesis, is a positive (rational) number. Hence, so is the leading coefficient $g_{2m-1,m-1}^{(r)}$ of $G_{2m-1}^{(r)}(N_r)$. Likewise, by the induction hypothesis, the coefficient $g_{2m-2,j-1}^{(r)}$ appearing in \eqref{case2} is a non-zero (rational) number with sign $(-1)^{m-1-j}$. Furthermore, by invoking Property 2, it can be seen that the sign of the (non-zero) term $B_{2m-2k} \, g_{2k-1,j}^{(r)}$ is equal to $(-1)^{m-k+1} (-1)^{k-1-j} = (-1)^{m-j}$, which does not depend on $k$. Therefore, the overall expression in \eqref{case2} turns out to be a non-zero (rational) number with sign $(-1)^{m-1-j}$. Similarly, the sign of the (non-zero) term $B_{2m-2k} \, g_{2k-1,0}^{(r)}$ in \eqref{case3} is equal to $(-1)^{m}$, and thus the sign of $g_{2m-1,0}^{(r)}$ is $(-1)^{m-1}$. Putting all these things together, it follows that, for each $j=0,1,\ldots,m-1$, the coefficient $g_{2m-1,j}^{(r)}$ is a non-zero (rational) number with sign $(-1)^{m-1-j}$. On the other hand,  it is obvious that, for $m=1$, $G_{2m-1}^{(r)}(N_r)$ is of the form \eqref{polgo} since $G_1^{(r)}(N_r) = 1$. Hence, we conclude that, for all integers $r \geq 1$ and $m \geq 1$, the polynomial $G_{2m-1}^{(r)}(N_r)$ is of the form \eqref{polgo}.

The validity of \eqref{polge} can be proved analogously. Now, the counterpart of \eqref{rec3} reads
\begin{equation}\label{rec4}
  (2m+r) G_{2m}^{(r)}(N_r) = 2m N_r G_{2m-1}^{(r)}(N_r) - r \sum_{k=1}^{m-1} \binom{2m}{2k} B_{2m-2k} G_{2k}^{(r)}(N_r),
\end{equation}
which applies to any integers $r\geq 0$ and $m \geq 1$. As before, when $r=0$, from \eqref{rec4} we have $G_{2m}^{(0)}(N_0) = N_0^{2m-1}$, provided that $G_{2m-1}^{(0)}(N_0) = N_0^{2m-2}$. Furthermore, when $m=1$, from \eqref{rec4} we obtain $G_{2}^{(r)}(N_r) = \frac{2}{r+2} N_r$, which is of the form \eqref{polge}. Considering the non-trivial case where $r$ and $m$ are arbitrary integers $r \geq 1$ and $m \geq 2$, the recurrence \eqref{rec4} gives us $G_{2m}^{(r)}(N_r)$ in terms of the lower-degree polynomials $G_{2}^{(r)}(N_r), G_{4}^{(r)}(N_r), \ldots, G_{2m-2}^{(r)}(N_r)$, and $G_{2m-1}^{(r)}(N_r)$. Thus, assuming that $G_{2k}^{(r)}(N_r)$ is of the form \eqref{polge} for all $k =1,2,\ldots,m-1$, and that $G_{2k-1}^{(r)}(N_r)$ is of the form \eqref{polgo} for $k =m$, implies that $G_{2m}^{(r)}(N_r) = \sum_{j=0}^{m-1} g_{2m,j}^{(r)} \, N_r^{2j+1}$, where
\vspace{2mm}
\begin{subnumcases}{g_{2m,j}^{(r)} =}
    \displaystyle \, \frac{2m}{2m+r} \, g_{2m-1,m-1}^{(r)}, \,\,\, \text{for}
     \,\, j = m-1; \label{case4} \\[1mm]
    \displaystyle \, \frac{1}{2m+r} \bigg[ 2m g_{2m-1,j}^{(r)}
     -r \sum_{k=j+1}^{m-1} \binom{2m}{2k} B_{2m-2k} \, g_{2k,j}^{(r)} \bigg] , \,\,\, \text{for}  \,\, j = 0,1,\ldots, m-2.  \label{case5}
\end{subnumcases}
Likewise, by the induction hypothesis, the coefficient $g_{2m-1,m-1}^{(r)}$ in \eqref{case4} is a positive (rational) number. Furthermore, by invoking the induction hypothesis and Property 2, one can easily deduce that the overall expression in \eqref{case5} is a non-zero (rational) number with sign $(-1)^{m-1-j}$. This means that, for each $j=0,1,\ldots,m-1$, the coefficients $g_{2m,j}^{(r)}$ conform to the required properties, and, consequently, the polynomial $G_{2m}^{(r)}(N_r)$ is of the form \eqref{polge} for all integers $r \geq 1$ and $m \geq 1$.
\end{proof}

Equations \eqref{case1}-\eqref{case3} [respectively, \eqref{case4}-\eqref{case5}] provide us with a concrete procedure to obtain $G_{2m-1}^{(r)}(N_r)$ [$G_{2m}^{(r)}(N_r)$] from the lower-degree polynomials $G_{1}^{(r)}(N_r), G_{3}^{(r)}(N_r), \ldots, G_{2m-3}^{(r)}(N_r)$, and $G_{2m-2}^{(r)}(N_r)$ [$G_{2}^{(r)}(N_r), G_{4}^{(r)}(N_r), \ldots, G_{2m-2}^{(r)}(N_r)$, and $G_{2m-1}^{(r)}(N_r)$]. For example, for $m=5$, equations \eqref{case1}-\eqref{case3} give
\begin{align*}
g_{9,0}^{(r)} & = \frac{3}{19}g_{1,0}^{(r)} - \frac{20}{19}g_{3,0}^{(r)} + \frac{42}{19}g_{5,0}^{(r)}
- \frac{60}{19}g_{7,0}^{(r)},  \\[1mm]
g_{9,1}^{(r)} & = \frac{9}{19}g_{8,0}^{(r)} - \frac{20}{19}g_{3,1}^{(r)} + \frac{42}{19}g_{5,1}^{(r)}
- \frac{60}{19}g_{7,1}^{(r)},  \\[1mm]
g_{9,2}^{(r)} & = \frac{9}{19}g_{8,1}^{(r)} + \frac{42}{19}g_{5,2}^{(r)} - \frac{60}{19}g_{7,2}^{(r)}, \\[1mm]
g_{9,3}^{(r)} & = \frac{9}{19}g_{8,2}^{(r)} - \frac{60}{19}g_{7,3}^{(r)}, \\[1mm]
g_{9,4}^{(r)} & = \frac{9}{19}g_{8,3}^{(r)},
\end{align*}
from which we can get $G_9^{(r)}(N_r)$ if we know the coefficients of the polynomials $G_1^{(r)}(N_r)$, $G_3^{(r)}(N_r)$, $G_5^{(r)}(N_r)$, $G_7^{(r)}(N_r)$, and $G_8^{(r)}(N_r)$.

\begin{remark}
For any integer $r\geq 1$, the binomial coefficient $\binom{n+r}{r+1}$ admits the following representation involving the (unsigned) Stirling numbers of the first kind $\genfrac{[}{]}{0pt}{}{r}{j}$ and the power sum polynomials $S_j(n)$ for each $j=1,2,\ldots,r$, namely (see, e.g., \cite[Equation (8)]{haggard} and \cite[Equation (10)]{cere3}):
\begin{equation*}
\binom{n+r}{r+1} = \frac{1}{r!} \sum_{j=1}^{r}  \genfrac{[}{]}{0pt}{}{r}{j} S_j(n).
\end{equation*}
Therefore, from \eqref{polg} and Theorem \ref{th:5}, it follows that, for all integers $r \geq 1$ and $m\geq 1$, the hyper-sum polynomial $S_m^{(r)}(n)$ can be expressed in the form
\begin{align*}
r! S_{2m-1}^{(r)}(n) & = \Bigg(  \sum_{j=1}^{r}  \genfrac{[}{]}{0pt}{}{r}{j} S_j(n) \Bigg) \times
\Bigg( \sum_{j=0}^{m-1} g_{2m-1,j}^{(r)} \,\Big( n+ \frac{r}{2}\Big)^{2j} \Bigg), \\[-3mm]
\intertext{for odd powers $2m-1$, and}
r! S_{2m}^{(r)}(n) & = \Bigg(  \sum_{j=1}^{r}  \genfrac{[}{]}{0pt}{}{r}{j} S_j(n) \Bigg) \times
\Bigg(  \sum_{j=0}^{m-1} g_{2m,j}^{(r)} \, \Big( n+\frac{r}{2}\Big)^{2j+1} \Bigg),
\end{align*}
for even powers $2m$. It is left as an open problem to determine an explicit formula for the coefficients $g_{2m-1,j}^{(r)}$ and $g_{2m,j}^{(r)}$.
\end{remark}

\section{Conclusion}

Summarizing, in this paper we have derived a new determinantal formula for the hyper-sums of powers of integers $S_m^{(r)}(n)$, embodied in equation \eqref{det} of Theorem \ref{th:4}. Central to the derivation of this determinantal formula is the discovery of the recurrence relation \eqref{lm:1}. Moreover, by virtue of Theorem \ref{th:5}, it turns out that, whenever $r \geq 1$ and $m \geq 1$, the determinant of the lower Hessenberg matrix \eqref{det1} is invariably an even or odd polynomial in $N_r$ of degree $m-1$, according as $m$ is odd or even. Consequently, whenever $r \geq 1$ and $m \geq 1$, the said determinantal formula \eqref{det} gives us $S_m^{(r)}(n)$ as $S_1^{(r)}(n)$ times an even or odd polynomial in $n + \frac{r}{2}$ of degree $m-1$, depending on whether $m$ is odd or even.

We conclude the main text with the following additional remarks.

For the special case in which $r=1$, the recurrence \eqref{lm:1} becomes
\begin{equation}\label{myrec}
    (m+1) S_{m}(n) = m \Big( n + \frac{1}{2} \Big) S_{m-1}(n) - \sum_{k=1}^{m-2} \binom{m}{k} B_{m-k} S_{k}(n),
\end{equation}
from which one can deduce the following determinantal formula for the power sum polynomial $S_m(n) = 1^m + 2^m + \cdots + n^m$, namely,
 \begin{align}\label{det2}
  S_m(n) = \, & \frac{(-1)^{m+1}}{(m+1)!} \Big( N^2 - \frac{1}{4} \Big)  \notag \\[2mm]
  & \quad\,\,  \times  \begin{vmatrix}
  -2N & 3 & 0 & \hdots & \hdots & 0 \\[3pt]
  \binom{3}{1}B_2 & -3N & 4 & 0 & \hdots & 0 \\[3pt]
  \binom{4}{1}B_3 & \binom{4}{2}B_2 & -4N & 5 & \ddots & 0  \\[3pt]
  \vdots & \vdots & \ddots  & \ddots & \ddots & 0 \\[3pt]
  \binom{m-1}{1}B_{m-2} & \binom{m-1}{2}B_{m-3} & \binom{m-1}{3}B_{m-4} & \hdots & -(m-1)N & m \\[3pt]
  \binom{m}{1}B_{m-1} & \binom{m}{2}B_{m-2} & \binom{m}{3}B_{m-3} & \hdots & \binom{m}{m-2}B_2 & -mN \\[3pt]
\end{vmatrix},
\end{align}
where $N$ is a shorthand for $n + \frac{1}{2}$. The above determinantal formula applies to any integer $m \geq 1$, with the convention that the displayed determinant of order $m-1$ is equal to $1$ when $m =1$. Incidentally, both the recurrence \eqref{myrec} and the determinantal formula \eqref{det2} were obtained by the author in \cite{cere2}. As an example, for $m=7$ and $m=8$, formula \eqref{det2} yields
\begin{align*}
S_7(n) & =  \frac{N^8}{8} - \frac{7 N^6}{24} + \frac{49 N^4}{192} - \frac{31 N^2}{384} + \frac{17}{2048}, \\
\intertext{and}
S_8(n) & =  \frac{N^9}{9} - \frac{N^7}{3} + \frac{49 N^5}{120} - \frac{31 N^3}{144} + \frac{127 N}{3840},
\end{align*}
respectively. In general, $S_m(n)$ can be expressed as an even or odd polynomial in $n + \frac{1}{2}$ depending on whether $m$ is odd or even \cite{hersh}. Specifically, for any integer $m \geq 1$, we have
\begin{align*}
S_{2m-1}(n) & = \sum_{j=0}^{m} f_{2m-1,j} \Big( n + \frac{1}{2}\Big)^{2j}, \\[-3mm]
\intertext{and}
S_{2m}(n) & = \sum_{j=0}^{m} f_{2m,j} \Big( n + \frac{1}{2}\Big)^{2j+1}.
\end{align*}
Theorem \ref{th:5} ensures that the coefficients $f_{2m-1,j}$ and $f_{2m,j}$, $j=0,1,\ldots,m$, are non-zero (rational) numbers with alternating signs. Explicit formulas for $f_{2m-1,j}$ and $f_{2m,j}$ in terms of the Bernoulli numbers can be found elsewhere.

On the other hand, for integers $r \geq 0$ and $m \geq 1$, one can readily deduce from \eqref{lm:2} the relations
\begin{align}
S_{2m-1}^{(r+1)}(n) & = \frac{1}{2} S_{2m-1}^{(r)}(n) + \frac{1}{2m} \sum_{k=1}^{m} \binom{2m}{2k} B_{2m-2k}
S_{2k}^{(r)}(n), \label{coffey1} \\
\intertext{and}
S_{2m}^{(r+1)}(n) & = \frac{1}{2} S_{2m}^{(r)}(n) + \frac{1}{2m+1} \sum_{k=1}^{m+1} \binom{2m+1}{2k-1} B_{2m+2-2k}
S_{2k-1}^{(r)}(n),  \label{coffey2}
\end{align}
which gives us the $(r+1)$-fold summation $S_{2m-1}^{(r+1)}(n)$ [respectively, $S_{2m}^{(r+1)}(n)$] in terms of the $r$-fold summations $S_{2}^{(r)}(n), S_{4}^{(r)}(n), \ldots, S_{2m}^{(r)}(n)$, and $S_{2m-1}^{(r)}(n)$ [$S_{1}^{(r)}(n), S_{3}^{(r)}(n), \ldots, S_{2m+1}^{(r)}(n)$, and $S_{2m}^{(r)}(n)$]. It is to be mentioned that relations \eqref{coffey1} and \eqref{coffey2} have been previously derived (in a slightly different form) by Coffey and Lettington using a different method (see Equations (1.11) and (1.10) in \cite{coffey}). For example, considering the specific case when $r = m=3$, and employing \eqref{coffey1} in combination with \eqref{det} leads to
\begin{align*}
S_{5}^{(4)}(n) - \frac{1}{2} S_{5}^{(3)}(n) & = \frac{1}{240} n(n+1)(n+2)(n+3)(2n+3) \\
& \quad\qquad\! \times \left[ \frac{5}{126} \Big( n + \frac{3}{2} \Big)^4 - \frac{5}{252} \Big( n + \frac{3}{2} \Big)^2
- \frac{859}{2016} \right],
\end{align*}
which should be compared with the (equivalent) result obtained in \cite{coffey}, namely,
\begin{align*}
S_{5}^{(4)}(n) - \frac{1}{2} S_{5}^{(3)}(n) & = \frac{1}{240} n(n+1)(n+2)(n+3)(2n+3) \\
& \quad\qquad\! \times \left[ \frac{5}{126} n^2 (n+3)^2 + \frac{10}{63} n(n+3) - \frac{17}{63} \right].
\end{align*}

\begin{center}
\section*{Appendix}
\end{center}

Here, we give a proof of the recurrence \eqref{rec} by mathematical induction. For this, let us consider in the first place the series $s_n = \sum_{j=1}^n a_j$ ($n \geq 1$), with the general term $a_j$ being quite arbitrary. Then, the following relation can be easily derived (see, e.g., \cite[Lemma]{sullivan}):
\begin{equation}\label{ap1}
\sum_{j=1}^n j a_j = (n+1) s_n - \sum_{j=1}^n s_j.  \tag{A1}
\end{equation}
For the purpose of our argument, we choose $a_j = S_m^{(r-1)}(j)$ where $m$ and $r$ are taken to be arbitrary but fixed integers with $m \geq 0$ and $r \geq 1$. Hence, in the language of hyper-sums of powers of integers, \eqref{ap1} can be written as
\begin{equation}\label{ap2}
\sum_{j=1}^n j  S_m^{(r-1)}(j) = (n+1) S_m^{(r)}(n) - S_m^{(r+1)}(n).  \tag{A2}
\end{equation}
Note that, for $r=1$, \eqref{ap2} becomes
\begin{equation*}
 S_{m+1}(n) = (n+1) S_m(n) - S_{m}^{(2)}(n),
\end{equation*}
or,
\begin{equation*}
 S_m^{(2)}(n) = (n+1) S_m(n) - S_{m+1}(n),
\end{equation*}
which is just the recurrence \eqref{rec} for $r=1$. In general, for any arbitrary integer $j \geq 1$, we have
\begin{equation*}
 S_m^{(2)}(j) = (j+1) S_m(j) - S_{m+1}(j).
\end{equation*}
Thus, summing each side of this equation from $j=1$ to $n$ yields
\begin{equation*}
S_{m}^{(3)}(n) = \sum_{j=1}^n j S_m(j) + S_m^{(2)}(n) - S_{m+1}^{(2)}(n).
\end{equation*}
When $r=2$, from \eqref{ap2} we obtain
\begin{equation*}
 \sum_{j=1}^n j S_m(j) = (n+1) S_m^{(2)}(n) - S_m^{(3)}(n),
\end{equation*}
and then
\begin{equation*}
  S_m^{(3)}(n) = \frac{n+2}{2} S_m^{(2)}(n) - \frac{1}{2} S_{m+1}^{(2)}(n),
\end{equation*}
which is just the recurrence \eqref{rec} for $r=2$.

Now, as the induction hypothesis, let us assume that, for any given integer $r\geq 2$, the following relationship holds true
\begin{equation*}
  S_m^{(r)}(j) = \frac{j+r-1}{r-1} S_m^{(r-1)}(j) - \frac{1}{r-1} S_{m+1}^{(r-1)}(j),
\end{equation*}
where $j$ stands for any arbitrary integer $j \geq 1$. Hence, summing over the first $n$ integers on both sides of the last equation, we find
\begin{equation*}
  S_m^{(r+1)}(n) = \frac{1}{r-1} \sum_{j=1}^n j S_m^{(r-1)}(j) + S_m^{(r)}(n) - \frac{1}{r-1} S_{m+1}^{(r)}(n).
\end{equation*}
Finally, by replacing $ \sum_{j=1}^n j S_m^{(r-1)}(j)$ with \eqref{ap2}, and after a trivial rearrangement, we obtain \eqref{rec}.

\vspace{.8cm}

\end{document}